\newtheorem{lemma}{Lemma}
\newtheorem{theorem}[lemma]{Theorem}
\newtheorem{corollary}[lemma]{Corollary}
\newtheorem{definition}[lemma]{Definition}
\newenvironment{proof}{\noindent{\em Proof:}}{$\Box$~\\}
\def\be{\begin{eqnarray}}
\def\ee{\end{eqnarray}}
\def\bes{\begin{eqnarray*}}
\def\ees{\end{eqnarray*}}
\def\R{\mathbb R}       % real numbers
\def\av{\mbox{\bf a}} \def\bv{\mbox{\bf b}} \def\cv{\mbox{\bf c}}
  \def\fv{\mbox{\bf f}}
\def\gv{\mbox{\bf g}}  
 \def\kv{\mbox{\bf k}} 
\def\mv{\mbox{\bf m}}  
\def\sv{\mbox{\bf s}}  
\def\vv{\mbox{\bf v}}  
\def\yv{\mbox{\bf y}}
\def\det{\operatorname{det}}    % determinant of a matrix
\def\dist{\operatorname{dist}}  % dist
\def\g{\gamma}
\def\l{\lambda}
\def\eps{\varepsilon}
\def\t{\tau}
\def\r{\rho}
\def\i{\mbox{i}}
\def\ol{\overline}
\def\phi{\varphi}
\begin{document}
\begin{frontmatter}

\title{Conchoid surfaces of spheres}
%\author{ Martin Peternell, David Gruber, Juana Sendra}
%\\
%Dpto. Matem\'atica Aplicada a la I.T. de Telecomunicaci\'on.\\
%Universidad Polit\'ecnica de Madrid, Spain \\
%jsendra@euitt.upm.es\\

\author[mp]{Martin Peternell}
\author[mp]{David Gruber}
\author[js]{Juana Sendra}

%\address[label1]{<address>}
\address[mp]{Institute of Discrete Mathematics and Geometry,\\
Vienna University of Technology, Vienna, Austria}
%\address[dg]{Wiedner Hauptstrasse 8--10, 1040 Vienna, Austria}
\address[js]{Dpto. Matem\'atica Aplicada a I. T. Telecomunicaci\'on, Univ. Polit\'ecnica de Madrid, Spain}

\begin{abstract}
The conchoid of a surface $F$ with respect to given fixed point $O$ is roughly speaking
the surface obtained by increasing the radius function with respect to $O$
by a constant.
This paper studies {\it conchoid surfaces of spheres} and shows that
these surfaces admit rational parameterizations.
%Thus a rational radius function $\rho(u,v)$ with respect to any focus point
%$O$ in three-space always exists.
Explicit parameterizations of these surfaces are constructed using the
relations to pencils of quadrics in $\R^3$ and $\R^4$. Moreover we
point to remarkable geometric properties of these surfaces and their
construction.
\end{abstract}

\begin{keyword}
 sphere, pencil of quadrics, rational conchoid surface,
polar representation, rational radius function.
\end{keyword}

\end{frontmatter}

%----------------------------------------------------------------------
\section{Introduction}\label{intro:sec}

The conchoid is a classical geometric construction and dates back
to the ancient Greeks. Given a planar curve $C$, a fixed point $O$ (focus point)
and a constant distance $d$, the conchoid $D$ of $C$ with respect to $O$ at
distance $d$ is the ({Zariski} closure of the) set of points $Q$ in the line $OP$
at distance $d$ of a moving point $P$ varying in the curve $C$,
\begin{equation}
D=\{ Q\in OP \mbox{ with } P\in C, \mbox{ and } \overline{QP}=d\}^{*},
\end{equation}
{where the asterisk denotes the Zariski closure.}
For a more formal definition of conchoids in terms of
diagrams of incidence we refer to \cite{sendra1, sendra2}.

The definition of the conchoid surface to a given surface $F$ in space
with respect to a given point $O$ and distance $d$ follows analogous lines.

We aim at studying real rational surfaces in 3-space whose conchoid surfaces are also
rational and real. A surface $F\subset\R^3$ will be represented by a polar representation
$\fv(u,v)=\r(u,v)\kv(u,v)$, where $\kv(u,v)$ is a parameterization of the
unit sphere $S^2$. Without loss of generality we assume $O=(0,0,0)$.
Consequently their conchoid surfaces $F_d$ for varying distance
$d$ admit the polar representation $\fv_d(u,v)=(\r(u,v)\pm d)\kv(u,v)$.

Since we want to determine classes of surfaces whose conchoid surfaces
for varying distances are rational, we focus at rational polar surface
representations. Then the 'base' surface $F$ and its conchoids $F_d$
correspond to the same rational parameterization $\kv(u,v)$ of the unit
sphere $S^2$.
The following definition excludes possibly occurring cases where $F$ and $F_d$
are rational, but their rational parameterizations $\fv$ and/or $\fv_d$
are not corresponding to a rational representation $\kv(u,v)$ of $S^2$.

\begin{definition}
A surface $F$ is called {\em rational conchoid surface} with respect to
the focus point $O=(0,0,0)$ if $F$ admits a {\em rational polar representation}
$\r(u,v)\kv(u,v)$, with a rational radius function $\r(u,v)$ denoting the
distance function from $O$ to $F$ and a rational parameterization $\kv(u,v)$
of $S^2$.
\end{definition}

\paragraph{Contribution:}

The main contribution of this article is the study of the conchoid surfaces
of spheres. We prove that a sphere $F$ in $\R^3$
admits a rational polar representation
$\fv(u,v)=\r(u,v)\kv(u,v)$ with a rational radius function $\r(u,v)$
and a particular rational parameterization $\kv(u,v)$ of the unit sphere $S^2$,
independently of the relative position of the sphere $F$ and the focus point $O$.
This implies that the conchoids $G$ of $F$ with respect to
{any focus in $\R^3$} admit rational parameterizations.

It is remarkable that an analogous result to this contribution for spheres
does not exist for circles and conics in $\R^2$.
The conchoid curves of conics $C$ are only rational
if either $O\in C$ or $O$ coincides with one of $C$'s focal points.

Two constructions to prove the main result are presented. The first one uses
the cone model being introduced in Section~\ref{conemodel:sec} and studies
a pencil of quadrics in $\R^4$. This construction
is explicit and leads to a surprisingly simple solution and a rational
polar representation of a sphere. The second approach investigates pencils
of quadrics in $\R^3$ containing a sphere and a cone of revolution whose
base locus is a rational quartic with rational distance from $O$.

%%----------------------------------------------------------------------
\subsection{The cone model}
\label{conemodel:sec}

Let $F$ be a surface in $\R^3$ and let $G$ be its conchoid surface at
distance $d$ with respect to the origin $O=(0,0,0)$ as focal point.
The construction of the conchoid surfaces $G$ of the 'base' surface $F$
is performed as follows.
Consider Euclidean 4-space $\R^4$ with coordinate axis $x,y,z$ and $w$,
where $\R^3$ is embedded in $\R^4$ as the hyperplane $w=0$.
Consider the quadratic cone $D:x^2+y^2+z^2-w^2=0$ in $\R^4$.
Further, let $A$ be the cylinder through $F$, whose generating
lines are parallel to $w$. Note that $A$ as well as $D$ are three-dimensional
manifolds in $\R^4$.
The conchoid construction of the 'base' surface $F$
is based on the study of the intersection $\Phi=A\cap D$, which
is typically a two-dimensional surface in $\R^4$.

For a given parameterization $\fv(u,v)$ of $F$ in $\R^3$, the cylinder
$A$ through $F$ admits the representation $\av(u,v,s) = (f_1,f_2,f_3,0)+s(0,0,0,1)$.
Let $F$ be a rational surface and $\fv(u,v)$ be rational. If the intersection
$\Phi = A\cap D$ is a rational surface in $\R^4$, then it is obvious that
$F$ admits a rational polar representation.
Let $\phi(a,b)=(\phi_1, \ldots, \phi_4)(a,b)$ be a rational
representation of $\Phi$ in $\R^4$, then $(\phi_1,\phi_2,\phi_3)(a,b)$ is
obviously a rational polar representation of $F$.
Since $\phi_4^2=\phi_1^2+\phi_2^2+\phi_3^2$ holds,
$\kv=1/\phi_4(\phi_1,\phi_2,\phi_3)$ is a rational
parameterization of $S^2$ and $\r(a,b)=\phi_4(a,b)$ is a rational radius function
of $F$. We summarize the construction.

\begin{theorem}\label{cone_model:theo}
The rational conchoid surfaces $F\subset\R^3$ are in bijective
correspondence to the rational 2-surfaces in the quadratic cone
$D:x^2+y^2+z^2-w^2=0$ in $\R^4$.
\end{theorem}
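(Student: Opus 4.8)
The plan is to establish the claimed bijection by exhibiting explicit maps in both directions and showing they are mutually inverse. The discussion immediately preceding the theorem essentially constructs one direction already, so I would organize the proof around making both directions precise and verifying that the rationality and polar-representation conditions transfer correctly.

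\emph{From surfaces in the cone to rational conchoid surfaces.} First I would take a rational $2$-surface $\Phi$ lying in $D$ and let $\phi(a,b)=(\phi_1,\phi_2,\phi_3,\phi_4)(a,b)$ be a rational parameterization. Since $\phi$ maps into $D$, the defining equation gives $\phi_4^2=\phi_1^2+\phi_2^2+\phi_3^2$ identically. I would then set $\r=\phi_4$ and $\kv=\frac{1}{\phi_4}(\phi_1,\phi_2,\phi_3)$, observing that $\kv$ parameterizes $S^2$ precisely because of this identity, and that both $\r$ and $\kv$ are rational. Hence $(\phi_1,\phi_2,\phi_3)=\r\kv$ is a rational polar representation of a surface $F\subset\R^3$, so $F$ is a rational conchoid surface by the definition. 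This is the direction already sketched in the text; I would mainly need to note that $\r=\phi_4$ is indeed the distance function from $O$, which follows since $|\kv|=1$.

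\emph{From rational conchoid surfaces to surfaces in the cone.} Conversely, I would start with a rational conchoid surface $F$ with rational polar representation $\fv=\r(u,v)\kv(u,v)$. I would define $\phi:=(\r\kv,\r)=(f_1,f_2,f_3,\r)$ as a map into $\R^4$. Because $\kv$ parameterizes $S^2$, we have $f_1^2+f_2^2+f_3^2=\r^2|\kv|^2=\r^2$, so $\phi$ lands in the cone $D$; and $\phi$ is rational since $\r$, $\kv$ and hence $\fv$ are rational. Its image is a rational $2$-surface in $D$ whose projection onto the first three coordinates is $F$. This shows every rational conchoid surface arises from a surface in $D$.

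\emph{Bijectivity and the main subtlety.} Finally I would check that the two assignments are inverse to each other: projecting $\phi=(\phi_1,\phi_2,\phi_3,\phi_4)$ to its first three coordinates and then lifting via the polar data $\r=\phi_4$, $\kv=\phi_4^{-1}(\phi_1,\phi_2,\phi_3)$ recovers $\phi$ exactly because $\phi_4>0$ is forced by the positivity of the radius function (one uses the branch $w=+\sqrt{x^2+y^2+z^2}$ of the cone). The step I expect to require the most care is this correspondence at the level of \emph{surfaces} rather than parameterizations: the theorem asserts a bijection between geometric objects, so I would need to argue that the construction is independent of the chosen rational parameterization and that the sign choice $\phi_4=\r\geq 0$ makes the lift well defined, thereby matching each conchoid surface with the unique branch of its lifted surface in $D$. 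Everything else is a direct consequence of the cone equation $w^2=x^2+y^2+z^2$ together with $|\kv|=1$.
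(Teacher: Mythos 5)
Your proposal is correct and follows essentially the same route as the paper's own proof: the forward direction projects $\phi=(\phi_1,\phi_2,\phi_3,\phi_4)$ onto $\R^3$ using $\phi_4^2=\phi_1^2+\phi_2^2+\phi_3^2$ to extract $\r=\phi_4$ and $\kv=\phi_4^{-1}(\phi_1,\phi_2,\phi_3)$, and the converse lifts $\fv=\r\kv$ to $\phi=\r(k_1,k_2,k_3,1)$, exactly as in the paper. Your additional remarks on the sign choice $\phi_4\geq 0$ and independence of the chosen parameterization address points the paper's terse proof leaves implicit, but they refine rather than change the argument.
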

\begin{proof}
We proved already that for a rational surface $\Phi\subset D$, its orthogonal projection
$(\phi_1,\phi_2,\phi_3)$ onto $\R^3$ is a rational conchoid surface
with rational radius function $\phi_4$.
Conversely, any rational conchoid surface
$F$ with respect to $O$ is defined by a rational polar parameterization
$\r(u,v)\kv(u,v)$, with $\kv=(k_1,k_2,k_3)\in\R(u,v)^3$ and $\|\kv \|=1$.
The corresponding surface $\Phi\subset D$ is represented by
$\phi(u,v)=\r(k_1, k_2, k_3, 1)(u,v)$.
\end{proof}

The quadratic cone $D$ possesses universal parameterizations and we may use
them to specify all possible rational parameterizations of rational
conchoid surfaces. The construction starts with rational universal
parameterizations of the unit sphere $S^2$. Following \cite{dietz} we
choose four arbitrary polynomials $a(u,v)$, $b(u,v)$, $c(u,v)$ and
$d(u,v)$ without common factor.
Let
$$
\alpha = 2(ac+bd), \beta = 2(bc-ad), \gamma = a^2+b^2-c^2-d^2, \delta = a^2+b^2+c^2+d^2,
$$
then $\kv(u,v) = \frac{1}{\delta}(\alpha,\beta,\gamma)$ is a rational
parameterization of the unit sphere $S^2$.
Thus $\phi(u,v)=\rho(u,v)(\alpha, \beta, \gamma, \delta)$ with a non-zero
rational function $\rho(u,v)$ is a rational parameterization of
a two-dimensional surface $\Phi\subset D$. Consequently
$$
\fv(u,v) = \rho(u,v)
\left(\frac{\alpha}{\delta}, \frac{\beta}{\delta}, \frac{\gamma}{\delta} \right)(u,v)
= \rho(u,v)\kv(u,v),
$$
is a rational polar representation of a rational conchoid surface $F$ in $\R^3$,
with $\rho(u,v)$ as radius function and $\kv(u,v)$ as rational parameterization
of the unit sphere $S^2$. It is sufficient to consider polynomials
and the construction reads as follows.

\begin{corollary}\label{univ_para_conch:theo}
Given six relatively prime polynomials $a(u,v)$, $b(u,v)$, $c(u,v)$, $d(u,v)$,
and $r(u,v)$ and $s(u,v)$, a universal parameterization of a rational
2-surface $\Phi\subset D$ in $\R^4$ is given by
\begin{equation}\label{univpararatconch}
\phi(u,v)=\frac{r}{s}\left( 2(ac+bd),2(bc-ad),a^2+b^2-c^2-d^2,
a^2+b^2+c^2+d^2 \right)(u,v).
\end{equation}
Consequently, a universal rational parameterization of a rational
conchoid surface reads
\begin{equation}
\fv(u,v)=\frac{r}{s(a^2+b^2+c^2+d^2)}
\left(2(ac+bd),2(bc-ad),a^2+b^2-c^2-d^2 \right)(u,v).
\end{equation}
\end{corollary}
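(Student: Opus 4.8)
The plan is to build on Theorem~\ref{cone_model:theo}, which already establishes a bijection between rational conchoid surfaces and rational $2$-surfaces $\Phi\subset D$. Thus the only thing left to verify is that the parameterization~\eqref{univpararatconch} is both \emph{correct} (its image lies in $D$) and \emph{universal} (every rational $2$-surface in $D$ arises this way for a suitable choice of the polynomials). First I would check correctness: writing $\alpha=2(ac+bd)$, $\beta=2(bc-ad)$, $\gamma=a^2+b^2-c^2-d^2$, $\delta=a^2+b^2+c^2+d^2$, one verifies the algebraic identity $\alpha^2+\beta^2+\gamma^2=\delta^2$, so that $\phi=\tfrac{r}{s}(\alpha,\beta,\gamma,\delta)$ satisfies $\phi_1^2+\phi_2^2+\phi_3^2-\phi_4^2=0$ and hence lies on $D$. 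The scalar factor $r/s$ does not disturb this, since $D$ is a cone. This identity is exactly the statement (cited from \cite{dietz}) that $\tfrac{1}{\delta}(\alpha,\beta,\gamma)$ parameterizes the unit sphere $S^2$, so the correctness step reduces to a routine expansion already available in the preamble.

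Next I would address universality, which is the substantive part. Given an arbitrary rational surface $\Phi\subset D$, I want to produce polynomials $a,b,c,d,r,s$ realizing it via~\eqref{univpararatconch}. By Theorem~\ref{cone_model:theo} any such $\Phi$ is the image of a rational polar representation, so $\Phi=\rho\,(k_1,k_2,k_3,1)$ where $\kv=(k_1,k_2,k_3)$ is a rational map into $S^2$ and $\rho$ a rational radius function. The crux is therefore to show that \emph{every} rational parameterization $\kv$ of $S^2$ can be written in the Dietz form $\tfrac{1}{\delta}(\alpha,\beta,\gamma)$ for appropriate polynomials $a,b,c,d$; this is the universality of the sphere parameterization established in \cite{dietz}, which I would invoke directly. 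Granting that, I set $\rho=r/s$ with $r,s$ the numerator and denominator of the rational radius function (in lowest terms), and clearing a common factor among all six polynomials gives the relatively prime condition.

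The main obstacle I anticipate is the bookkeeping around \emph{denominators and common factors}. The expression $\tfrac{1}{\delta}(\alpha,\beta,\gamma)$ has a built-in denominator $\delta$, while the universal polar representation multiplies by an independent radius factor $\rho=r/s$; one must check that these combine cleanly into $\tfrac{r}{s\delta}(\alpha,\beta,\gamma)$ without introducing spurious base points or forcing cancellations that would violate the relative-primeness claim. A careful statement of ``universal'' is also needed: the claim is not that the representation is unique, but that the family~\eqref{univpararatconch} sweeps out \emph{all} rational $2$-surfaces in $D$ as the six polynomials range freely. Once the Dietz parameterization of $S^2$ is cited and the cone structure of $D$ is used to absorb the scalar factor, the corollary follows by composing these two facts, and the final formula for $\fv(u,v)$ is obtained simply by projecting $\phi$ to its first three coordinates and dividing by $\phi_4$, exactly as in the construction preceding Theorem~\ref{cone_model:theo}.
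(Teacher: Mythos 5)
Your proposal is correct and follows essentially the same route as the paper: it verifies the identity $\alpha^2+\beta^2+\gamma^2=\delta^2$ (implicit in the paper via the citation of Dietz et al.), invokes the universality of the Dietz parameterization of $S^2$, and combines it with the bijection of Theorem~\ref{cone_model:theo} together with the scalar factor $r/s$ absorbed by the cone structure of $D$. Your treatment of the denominator bookkeeping and of what ``universal'' means is somewhat more explicit than the paper's, but the underlying argument is identical.
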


This is a general result about all rational parameterizations
of rational conchoid surfaces. For a particular given
rational surface $F$ it is difficult
to decide whether the intersection $\Phi=D\cap W$ admits rational
parameterizations or not. Typically the surface $\Phi$ is not rational.
Nevertheless, there are interesting non-trivial
cases where $\Phi$ admits rational parameterizations.

In \cite{pet10} it has been proved that conchoids
of rational ruled surfaces $F$ are rational.
We give a hint how this result can be proved with help of the cone model $D$ and Theorem~\ref{cone_model:theo}.
If $F$ is a ruled surface, the cylinder $A\subset\R^4$ carries a one-parameter
family of planes parallel to the $w$-axis. These planes pass through the
generating lines of $F$. This implies that typically the intersection
$\Phi=A\cap D$ carries a one-parameter family of conics obtained as intersections
of the mentioned planes with $D$. This family of conics is rational, and
it is known (\cite{pet_thesis, schicho_thesis}) that there exist
rational parameterizations $\phi(u,v)$ of $\Phi$. Thus the conchoids
of real rational ruled surfaces are rational.

In this context we mention a trivial but useful statement which we prove for completeness.
\begin{lemma}\label{ratcrv_rotcone:theo}
Given a rational curve $C$ with parameterization $\cv(t)$ on a rotational cone $D$,
then the distance $\|\cv(t)-\vv\|$ between the curve $C$ and the vertex $\vv$ of $D$
is a rational function.
\end{lemma}

\begin{proof}
We use a special coordinate system with $\vv$ at the origin, and $z$
as rotational axis of $D$. This implies that $D$ is the zero set of
$x^2+y^2-\g^2z^2$. Without loss of generality we let $\g=1$.
The given curve $C$ admits therefore a rational parameterization
$\cv(t)=(c_1,c_2,c_3)(t)$ satisfying $c_1^2+c_2^2=c_3^2$.
Obviously one obtains $\|\cv(t)\| = \sqrt 2 c_3(t)$ being rational.
\end{proof}

%----------------------------------------------------------------------
\section{Conchoids of spheres}
\label{conch_sphere:sec}

Given a sphere $F$ in $\R^3$ and an arbitrary focus point $O$, the question arises
if there exists a rational representation $\fv(u,v)$ of $F$ with the property
that $\|\fv(u,v)\|$ is a rational function of the parameters $u$ and $v$.
To give a constructive answer to this question we describe an approach using the
cone-model presented in Section~\ref{conemodel:sec}. Later on in Section~\ref{revcone:sec} we study a different method  working in $\R^3$ directly.
There are several relations between these methods which will be discussed along
their derivation.

Let $F$ be the sphere with center $\mv=(m,0,0)$ and radius $r$, and let
$O=(0,0,0)$. Thus $F$ is given by
\begin{equation}\label{sphere:eq}
F: (x-m)^2+y^2+z^2-r^2=0.
\end{equation}
If $m=0$, the center of $F$ coincides with $O$. In this trivial
situation the conchoid surface of $F$ is reducible and consists of two spheres,
where one might degenerate to $F$'s center if $d=r$.
If $m^2-r^2=0$, the focal point $O$ is contained in $F$. To construct a rational
polar representation, we make the ansatz $\fv(u,v) = \rho(u,v)\kv(u,v)$ with
$\kv(u,v)=(k_1,k_2,k_3)(u,v)$ and $\|\kv(u,v)\|=1$ and
an unknown radius function $\rho(u,v)$.
Plugging this into~\eqref{sphere:eq}, we obtain a rational
polar representation with rational radius function $\rho(u,v)=2mk_1(u,v)$.
Note that in this case the conchoid is irreducible and rational.

%----------------------------------------------------------------------
\subsection{Pencil of quadrics in $\R^4$}
\label{quad_pencil2:sec}

Consider the Euclidean space $\R^4$ with coordinate axes $x,y,z$ and $w$ and
let $\R^3$ be embedded as the hyperplane $w=0$.
Let a sphere $F\subset\R^3$ be defined by~\eqref{sphere:eq} and $O=(0,0,0)$.
To study the general case we assume $m\not=0$ and $m^2\not=r^2$.
The equation of the cylinder $A\subset\R^4$ through $F$ with
$w$-parallel lines agrees with the equation of $F$ in $\R^3$,
\begin{equation}
A: (x-m)^2+y^2+z^2-r^2=0.
\end{equation}
Consider the pencil $Q(t) = A+tD$ of quadrics in $\R^4$, spanned by $A$ and
the quadratic cone $D:x^2+y^2+z^2=w^2$ from Section~\ref{conemodel:sec}.
Any point $\ol X=(x,y,z,w)\in D$ has the property that the distance from
$X=(x,y,z)$ to $O$ in $\R^3$ equals $w$.
We study the geometric properties of the del Pezzo surface $\Phi=A\cap D$
of degree four, the base locus of the pencil of quadrics $Q(t)$.
According to Theorem~\ref{cone_model:theo}, the sphere $F$
is a rational conchoid surface exactly if $\Phi$ admits rational
parameterizations.

Besides $A$ and $D$ there exist two further singular quadrics in $Q(t)$.
These quadrics are obtained
for the zeros $t_1 = -1$ and $t_2 = r^2/\g^2$ of the characteristic polynomial
\begin{equation}\label{charpoly_r4:eq}
\det(A+tD)=-(1+t)^2 t (\g^2 t - r^2), \mbox{ with } \g^2 = m^2-r^2 \not=0.
\end{equation}
The quadric corresponding to the twofold zero $t_1=-1$ is a cylinder
\begin{equation}\label{parcyl:eq}
R: w^2 - 2mx + m^2-r^2=0.
\end{equation}
Its directrix is a parabola in the $xw$-plane and its two-dimensional generators
are parallel to the $yz$-plane.
The singular quadric $S$ corresponding to $t_2=r^2/\g^2$ is a quadratic cone and reads
$$
%S: \frac{m^2}{m^2-r^2}(x^2+y^2+z^2)-\frac{r^2}{m^2-r^2}w^2 - 2mx + m^2-r^2=0.
%
S: \left( x-\frac{m^2-r^2}{m}\right)^2+y^2+z^2 = \frac{r^2}{m^2}w^2.
$$
Its vertex is the point $O'=(\frac{m^2-r^2}{m},0,0,0)$. The intersections
of $S$ with three-spaces $w=c$ are spheres $\sigma(c)$,
whose top view projections in $w=0$
are centered at $O'$ and their radii are $rc/m$.
%Thus the spheres are given by
%$$
%s(c): \left(x-\frac{m^2-r^2}{m}\right)^2+y^2+z^2=\frac{r^2c^2}{m^2}.
%$$
The intersections of $D$ with three-spaces $w=c$ are spheres $d(c)$ whose top view projections in $w=0$ are centered at $O$ with radii $c$.
The intersections $k(c)=s(c)\cap d(c)$ of these spheres ($w=c$) are circles
in planes $x=(c^2+m^2-r^2)/(2m)$.
Thus $\Phi$ contains a family of conics, whose top
view projections are the circles $k(c)$. The conics in $\Phi$ are contained
in the planes
$$
\eps(c): x=\frac{c^2+m^2-r^2}{2m}, w=c.
$$

The half opening angle $\delta$ of $D$ with respect to the $w$-axis is $\pi/4$, thus
$\tan \delta=1$. The half opening angle $\sigma$ of $S$ is given by
$\tan\sigma=r/m$, see Figure~\ref{quadpencilr4:fig}. Applying the scaling
$$
(x', y', z', w')=(fx, fy, fz, w), \mbox{ with } f=\frac{r}{m}
$$
in $\R^4$ maps $D$ to a congruent copy of $S$.
Consider a point $\overline X=(x,y,z,w)$ in $\Phi=A\cap D$ and its projection
$X=(x,y,z)$ in $F$.
The distance $\dist(X,O)$ of $X$ to $O$ in $\R^3$ is $w$.
For the distance $\dist(X,O')$ between $X$ and $O'$ we consequently obtain
\begin{equation}\label{dist_frac:eq}
\dist(X,O')=\frac{r}{m}\dist(X,O), \mbox{ for all } X\in F.
\end{equation}

\paragraph{Remark on the circle of Apollonius}
Note that $O'$ is the inverse point of $O$ with respect to the sphere $F$.
It is an old result by Apollonius Pergaeus (262--190 b.c.)
that the set of points $X$ in the plane having constant ratio of distances
$f=d/d'$, with $d=\dist(O,X)$ and $d'=\dist(O',X)$, from two given fixed
points $O$ and $O'$, respectively, is a circle $k$, see Fig.~\ref{apoll_circle_fig}.
Rotating $k$ around the line $OO'$ gives the sphere $F$ and $O$ and $O'$ are inverse
points with respect to $F$ (and the circle $k$).

If we consider a varying constant ratio $f$, one obtains a family of spheres
$F(f)$ with inverse points $O$ and $O'$ which form an elliptic pencil of
spheres. Their centers are on the line $OO'$.
Ratio 1 ($d=d'$) corresponds to the bisector plane of $O$ and $O'$.
%A sphere from one pencil intersects all spheres from the associated one orthogonally.

\begin{figure}
\subfigure[Sketch of the quadric pencil in $\R^4$ ]{\label{quadpencilr4:fig}
\begin{overpic}[width=.55\textwidth]{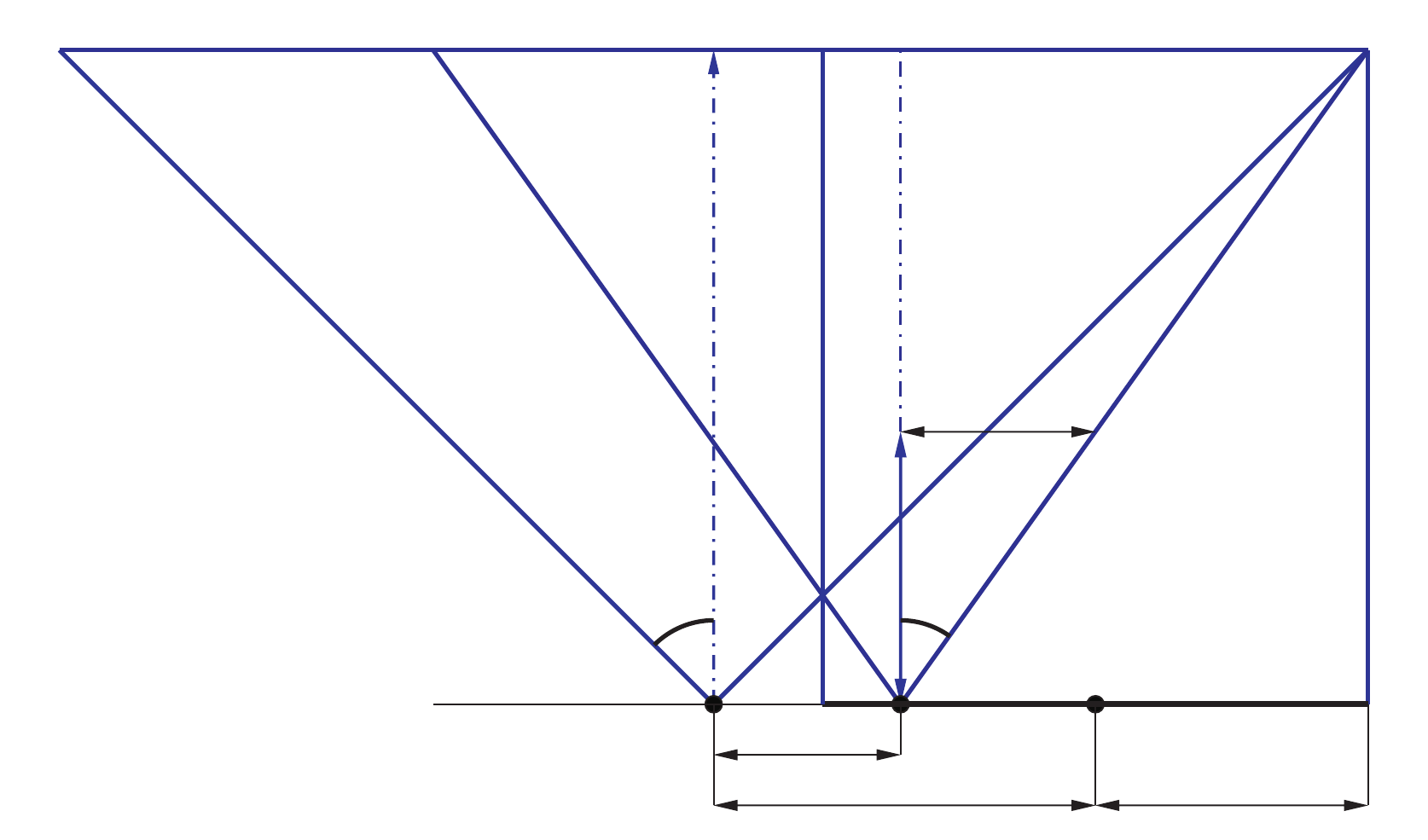}%
{\small
\put(25,10){$\R^3$} \put(52,52){$w$} \put(90,30){$A$}
\put(45,5){$O$} \put(65,5){$O'$} \put(75,11){$M$} \put(90,11){$F$}
\put(85,-1){$r$} \put(60,-1){$m$} \put(66,31){$\frac{r^2}{m}$}
\put(64,20){$r$} \put(53,5){$\frac{\g^2}{m}$}
\put(45.5,15.5){$\delta$} \put(64.5,15.5){$\sigma$}
\put(12,50){$D$} \put(37,50){$S$}
}
\end{overpic}
}
\hfil
\subfigure[Apollonius circle]{\label{apoll_circle_fig}
\begin{overpic}[width=.4\textwidth]{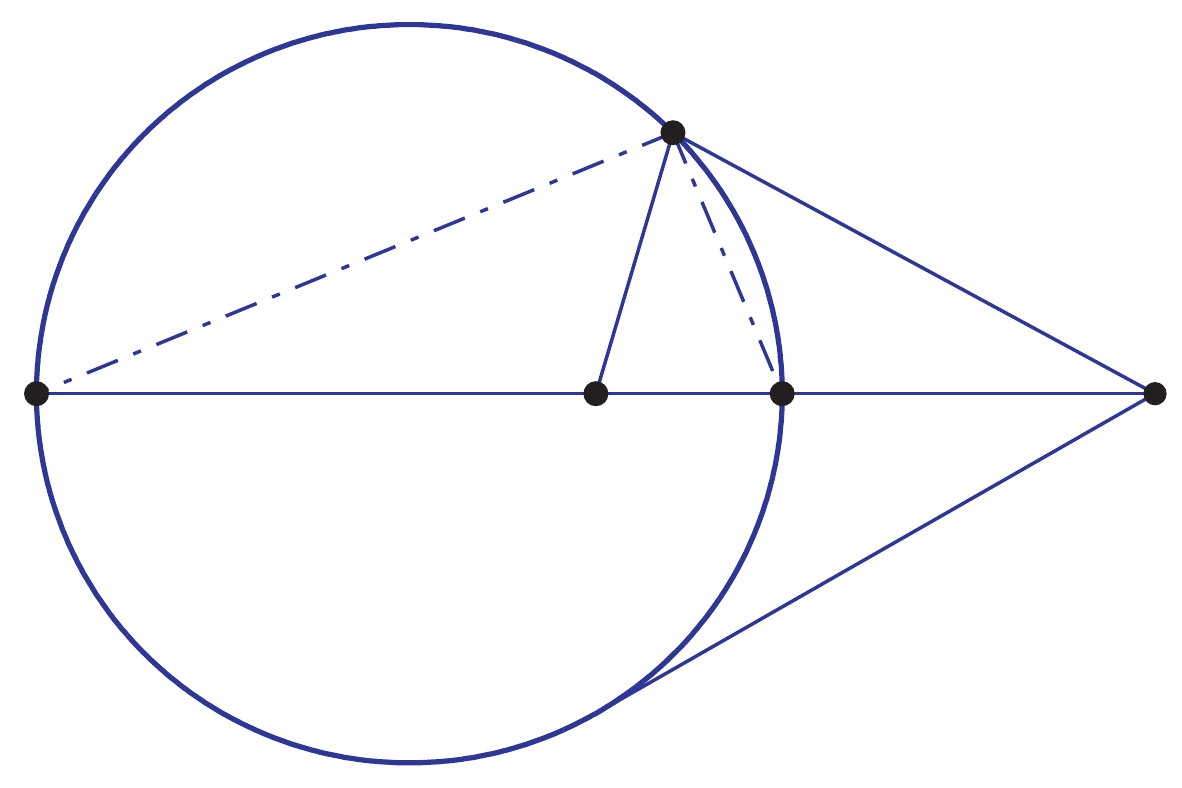}%
{\small
%\put(5,28){$A$} \put(60,28){$B$}
\put(98,28){$O$}
\put(51,28){$O'$} \put(57,57){$X$}
\put(10,58){$k$}
}
\end{overpic}
}
\caption{Pencil of quadrics in $\R^4$ and Apollonius circle}
\label{fig3}
\end{figure}

%----------------------------------------------------------------------
\subsection{A rational quartic on the sphere }
\label{ratquart:sec}

The pencil of quadrics $Q(t)$ in $\R^4$ spanned by the sphere $F$ and the
cone $D$ contains the cylinder $R$.
Expressing the variable $x$ from~\eqref{parcyl:eq} one gets
\begin{equation}\label{x-coord_c:eq}
x=\frac{w^2+m^2-r^2}{2m},
\end{equation}
and inserting this into $D$ results in the polynomial
\begin{equation}\label{conic_pencil:eq1}
\alpha(w): 4m^2(y^2+z^2) + p(w)=0, \mbox{ with }
p(w)= w^4 - 2w^2(m^2+r^2)+(m^2-r^2)^2.
\end{equation}
Considering $y$ and $z$ as variables, $\alpha(w)$ is
a one-parameter family of conics (circles) in the $yz$-plane,
depending {\it rationally} on the parameter $w$.
The circles $\alpha(w)$ do not possess real points for all $w$,
but there exist intervals determining families of real circles $\alpha(w)$.
To obtain real circles one has to perform a re-parameterization $w(u)$
within an appropriate interval.
The factorization of $p(w)$ reads
$$
p(w)=(w+a)(w-a)(w+b)(w-b), \mbox{ with } a=m+r, \mbox{ and } b=m-r.
$$
If $O$ is outside of $F$, thus $m>r$, the polynomial $-p(w)$ is positive
in the interval $[m-r, m+r]$. Thus a possible re-parameterization is
\begin{equation}\label{repara:eq}
w(u) = \frac{a u^2 + b}{1+u^2} = \frac{u^2(m+r)+m-r}{1+u^2}.
\end{equation}
Otherwise we could re-parameterize over another appropriate interval.
%$[-(m-r), m+r]$ and set
%$w(u)=(u^2(m+r)-(m-r))/(1+u^2)$.
Additionally we note that if $O$ is inside of $F$, the inverse point $O'$
is outside of $F$. Since equation~\eqref{dist_frac:eq} holds for the distances
of a point $X\in F$ to $O$ and $O'$,
we can exchange roles and perform the computation for the point $O'$.
%Section~\ref{inversion} gives another explanation on this geometric property
%on proportional distances.

We return to the family of conics $\alpha(w)$.
Substituting \eqref{repara:eq} into \eqref{conic_pencil:eq1} leads to
a family of real conics
\begin{equation}\label{repara2:eq}
\alpha(u): y^2+z^2 = \frac{4r^2 u^2}{m^2(1+u^2)^4}(au^2+m)(mu^2+b).
\end{equation}
We are looking for rational functions $y(u)$ and $z(u)$ satisfying
\eqref{repara2:eq} identically.
Therefore we introduce
auxiliary variables $\tilde y$ and $\tilde z$ by the relations
$y = 2\tilde y ru/(m(1+u^2)^2)$ and $z = 2\tilde z ru/(m(1+u^2)^2)$.
We obtain $\tilde y^2 + \tilde z^2 =  (au^2+m)(mu^2+b)$.
Factorizing left and right hand side of this equation
results in a linear system to determine $\tilde y$ and $\tilde z$,
\begin{eqnarray*}
\tilde y + \i \tilde z &=& (\sqrt a u + \i \sqrt m)(\sqrt m u + \i \sqrt b), \\
\tilde y - \i \tilde z &=& (\sqrt a u - \i \sqrt m)(\sqrt m u - \i \sqrt b).
\end{eqnarray*}
The solution $\tilde y = \sqrt m (\sqrt a u^2 - \sqrt b)$,
$\tilde z = u(m + \sqrt{ab})$ finally leads to
\begin{equation}\label{conic_fam_par:eq}
y(u) = \frac{2r\sqrt m u}{m(1+u^2)^2}\left( \sqrt{a}u^2-\sqrt{b}\right),
\mbox{ and }
z(u) = \frac{2ru^2}{m(1+u^2)^2}\left( m+\sqrt{ab}\right),
\end{equation}
which is a rational parameterization of a curve in the $yz$-plane, following
the family of conics $\alpha(w)$.

We note that any real rational family of conics possesses
real rational parameterizations, see for instance \cite{pet_thesis, schicho_thesis}.
The solution \eqref{conic_fam_par:eq} together with \eqref{x-coord_c:eq} determines
a curve $C \subset F$ which possesses the rational distance function
\begin{equation}\label{ratdist1}
\| \cv(u) \| = w(u) = \frac{u^2(m+r)+ (m-r)}{1+u^2}
\end{equation}
with respect to $O$. Its parameterization is
\begin{equation}\label{quartic_on_s2:eq}
\cv(u) = \frac{1}{m(1+u^2)^2}
\left(
\begin{array}{c}
u^4m(m+r) + 2u^2(m^2-r^2)+m(m-r)\\
2r\sqrt{m}u(u^2\sqrt{m+r} - \sqrt{m-r})\\
2ru^2(m + \sqrt{m^2-r^2})\\
\end{array}
                    \right).
\end{equation}

\begin{theorem}\label{ratquartic:theo}
Let $F$ be a sphere and let $O$ be an arbitrary point in $\R^3$.
Then there exists a rational quartic curve $C\subset F$ and a rational parameterization
$\cv(u)$ of $C$ such that the distance of $C$ to $O$ is a rational function in
the curve parameter $u$.
\end{theorem}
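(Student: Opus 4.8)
The plan is to construct the rational quartic $C$ explicitly as the intersection of the sphere $F$ with the cylinder $R$ from the pencil $Q(t)$, and then verify that the resulting curve carries a rational distance function. The key structural observation is that $R$ is the singular quadric corresponding to the twofold eigenvalue $t_1=-1$, so its base-locus intersection with the cone $D$ is precisely the del Pezzo surface $\Phi=A\cap D$, and a well-chosen curve on $\Phi$ projects to the desired quartic on $F$. Since any point $\ol X=(x,y,z,w)\in D$ satisfies $\dist(X,O)=w$, producing a rational curve on $\Phi$ with rational $w$-coordinate automatically yields the rational distance function, which is exactly the leverage one wants.

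First I would eliminate $x$ using the cylinder equation~\eqref{parcyl:eq}, giving $x=(w^2+m^2-r^2)/(2m)$ as in~\eqref{x-coord_c:eq}. Substituting into the cone $D$ collapses the problem to the single-parameter family of circles $\alpha(w)$ in~\eqref{conic_pencil:eq1}, where $w$ plays the role of the distance to $O$. The next step is to choose a rational re-parameterization $w(u)$ that renders $-p(w)$ nonnegative on an appropriate interval; the factorization $p(w)=(w+a)(w-a)(w+b)(w-b)$ with $a=m+r$, $b=m-r$ identifies $[m-r,m+r]$ as the relevant window when $O$ lies outside $F$, and~\eqref{repara:eq} supplies a suitable $w(u)$. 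When $O$ lies inside $F$ one exchanges the roles of $O$ and its inverse point $O'$, which is legitimate because~\eqref{dist_frac:eq} relates the two distance functions by the constant factor $r/m$.

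After substituting $w(u)$ into $\alpha(w)$ to obtain~\eqref{repara2:eq}, the remaining task is to parameterize the conic rationally in $y$ and $z$. Here I would introduce the auxiliary scaling $y=2\tilde y\, ru/(m(1+u^2)^2)$ and $z=2\tilde z\, ru/(m(1+u^2)^2)$, reducing the identity to $\tilde y^2+\tilde z^2=(au^2+m)(mu^2+b)$, and then factor both sides over $\C$ to split it into the conjugate linear pair displayed before~\eqref{conic_fam_par:eq}. Solving that linear system yields $\tilde y=\sqrt m(\sqrt a\, u^2-\sqrt b)$ and $\tilde z=u(m+\sqrt{ab})$, whence~\eqref{conic_fam_par:eq} and finally the full parameterization $\cv(u)$ in~\eqref{quartic_on_s2:eq}. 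To finish, I would confirm that $\|\cv(u)\|=w(u)$ equals~\eqref{ratdist1}, which follows directly from $\cv\in F$ together with the defining property of $D$, and check that $\cv(u)$ has coordinate degree four in $u$ so that $C$ is genuinely a quartic.

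The main obstacle is the real-parameterization step: the conics $\alpha(w)$ fail to carry real points for all $w$, so one must be careful to re-parameterize over an interval on which $-p(w)$ stays nonnegative, and the two sub-cases ($O$ inside versus outside $F$) must be handled by the inversion symmetry~\eqref{dist_frac:eq} rather than by a single uniform formula. The algebraic factorization into conjugate linear factors is routine once real solvability is secured, but demonstrating that a real rational parameterization exists at all is the genuine content; the general principle that every real rational family of conics admits real rational parameterizations (cited via \cite{pet_thesis, schicho_thesis}) underpins this, and the explicit solution above makes it constructive.
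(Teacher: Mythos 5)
Your proposal is correct and follows essentially the same route as the paper: eliminating $x$ via the parabolic cylinder $R$ of the pencil in $\R^4$, re-parameterizing $w$ over the interval $[m-r,m+r]$ (handling the case $O$ inside $F$ via the inverse point $O'$ and \eqref{dist_frac:eq}), and solving the conic family by the complex factorization $\tilde y \pm \i\tilde z$, arriving at the same parameterization \eqref{quartic_on_s2:eq} with rational norm \eqref{ratdist1}. No substantive differences from the paper's argument.
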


Rotating $C$ around the $x$-axis leads to a rational polar representation
$r(u,v)\kv(u,v)$ of $F$ with rational distance function $\r(u,v)=w(u)$
from $O$. The quartic curve $C$ together
with this parameterization is illustrated in Fig.~\ref{basecrv_fig}.
Fig.~\ref{conch_fig} displays a sphere $F$ together with both conchoid surfaces
$G_1$ and $G_2$ for distances $d$ and $-d$ with respect to $O$.
We summarize the presented construction.
\begin{theorem}\label{ratconchoid:theo}
Spheres in $\R^3$ admit rational polar representations with respect to
any focus point $O$. This implies that the conchoid surfaces
of spheres admit rational parameterizations. The construction is based on rational
quartic curves on $F$ with rational distance from $O$.
\end{theorem}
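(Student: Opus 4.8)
The plan is to assemble Theorem~\ref{ratconchoid:theo} almost entirely from results already established, since the hard analytic work is done in Theorem~\ref{ratquartic:theo}. The essential observation is that the sphere $F$ is rotationally symmetric about the line $OM$ joining the focus $O$ to the center $\mv$, and this symmetry lets me promote the one-parameter rational quartic $C$ of Theorem~\ref{ratquartic:theo} into a two-parameter polar representation of all of $F$. Concretely, I would choose coordinates so that $\mv=(m,0,0)$ and $O$ is the origin, exactly as in~\eqref{sphere:eq}, so that the $x$-axis is the rotation axis. Then $C$, given by $\cv(u)$ in~\eqref{quartic_on_s2:eq}, meets every circle of latitude of $F$ (every circle cut by a plane $x=\mathrm{const}$) in at least one point, so rotating $C$ about the $x$-axis sweeps out the whole sphere.

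First I would introduce the rotation explicitly. A rotation about the $x$-axis by angle $v$ fixes the first coordinate and rotates $(c_2,c_3)$; to keep everything rational I would parameterize this rotation by the standard rational substitution $\cos v = (1-t^2)/(1+t^2)$, $\sin v = 2t/(1+t^2)$ with $t=\tan(v/2)$, giving a rational rotation matrix $R(t)$. Setting
$$
\fv(u,t) = R(t)\,\cv(u)
$$
then produces a rational map into $\R^3$ whose image is $F$. Because rotation about the axis through $O$ is an isometry fixing $O$, the distance to $O$ is unchanged: $\|\fv(u,t)\| = \|\cv(u)\| = w(u)$, which is rational by~\eqref{ratdist1}. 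Hence $\r(u,t):=w(u)$ is a rational radius function, and $\kv(u,t):=\fv(u,t)/w(u)$ is a rational parameterization of the unit sphere $S^2$. This is precisely a rational polar representation in the sense of the Definition, so $F$ is a rational conchoid surface with respect to $O$.

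The conchoid statement then follows immediately from the framework set up in the introduction. Since $F$ admits the polar representation $\fv(u,t)=\r(u,t)\kv(u,t)$ with $\r$ rational and $\kv$ a rational parameterization of $S^2$, its conchoid surfaces at distance $\pm d$ are $\fv_d(u,t)=(\r(u,t)\pm d)\,\kv(u,t)$, which are manifestly rational. Equivalently, one may invoke Theorem~\ref{cone_model:theo}: the surface $\Phi = A\cap D$ in $\R^4$ is parameterized rationally by $(\,\fv(u,t),\,w(u)\,)$, a rational 2-surface on the cone $D$, and projecting back to $\R^3$ yields the conchoids. I would also note that the construction is uniform in the relative position of $O$ and $F$: the degenerate cases $m=0$ and $m^2=r^2$ were already handled directly in Section~\ref{conch_sphere:sec}, and the reparameterization~\eqref{repara:eq} (or the exchange of roles between $O$ and the inverse point $O'$ via~\eqref{dist_frac:eq} when $O$ lies inside $F$) covers the remaining ranges, so every focus $O\in\R^3$ is accounted for.

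The only genuine point requiring care — and the step I expect to be the main obstacle — is verifying that rotating the single quartic $C$ actually covers the entire sphere, rather than missing a cap or a pair of points. This reduces to checking that the $x$-coordinate of $\cv(u)$ in~\eqref{quartic_on_s2:eq}, equivalently $x=(w(u)^2+m^2-r^2)/(2m)$ from~\eqref{x-coord_c:eq}, ranges over the full interval $[m-r,\,m+r]$ of latitudes of $F$ as $u$ varies over the chosen reparameterization interval; since $w(u)$ in~\eqref{repara:eq} sweeps $[m-r,m+r]$ monotonically, this holds, and for each attained latitude the rotation supplies the missing circular coordinate. A secondary bookkeeping issue is confirming that $w(u)$ is nowhere zero on the parameter domain so that $\kv=\fv/w$ is well defined; this is clear when $O$ is exterior to $F$ (where $w\in[m-r,m+r]$ with $m>r$), and the interior case is disposed of by passing to $O'$ as indicated above.
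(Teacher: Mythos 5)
Your proposal is correct and takes essentially the same route as the paper: the paper also deduces Theorem~\ref{ratconchoid:theo} by rotating the rational quartic $C$ of Theorem~\ref{ratquartic:theo} about the $x$-axis to obtain the rational polar representation $\r(u,v)\kv(u,v)$ with $\r = w(u)$, and then parameterizes the conchoids as $(\r\pm d)\kv$, handling the cases $m=0$, $m^2=r^2$, and $O$ interior (via the inverse point $O'$) exactly as you do. Your explicit tangent half-angle rotation and the latitude-coverage check merely spell out details the paper leaves implicit.
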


\begin{figure}
\subfigure[Base curve of the pencil in $\R^3$]{\label{basecrv_fig}
\begin{overpic}[width=.4\textwidth]{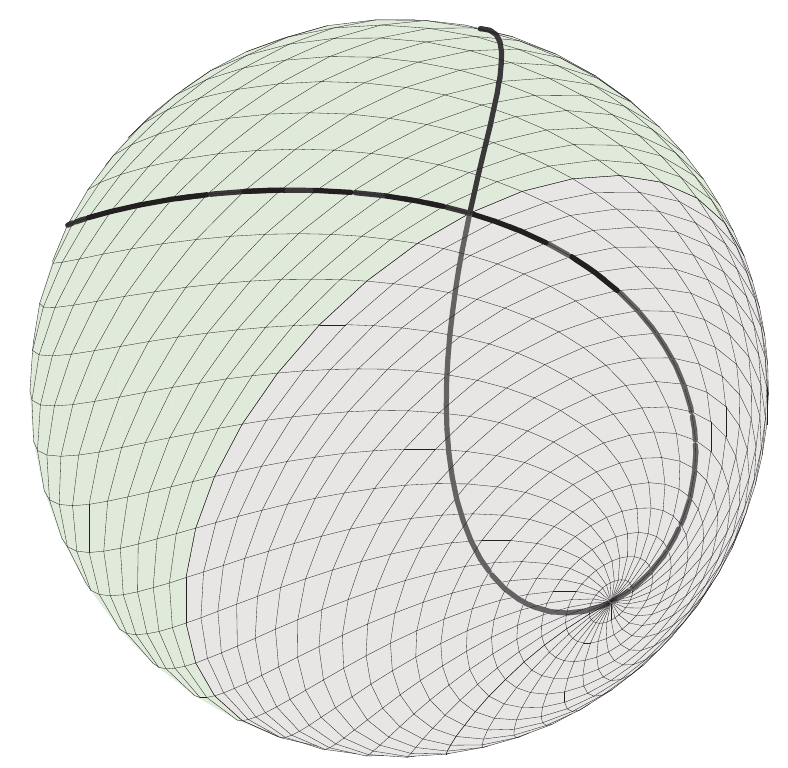}%
{\small \put(19,5){$F$} \put(5,70){$\cv$}}
\end{overpic}
}
\subfigure[Sphere $F$ and both conchoid surfaces $G_1$ and $G_2$ w.r.t. $O$]{\label{conch_fig}
\begin{overpic}[width=.5\textwidth]{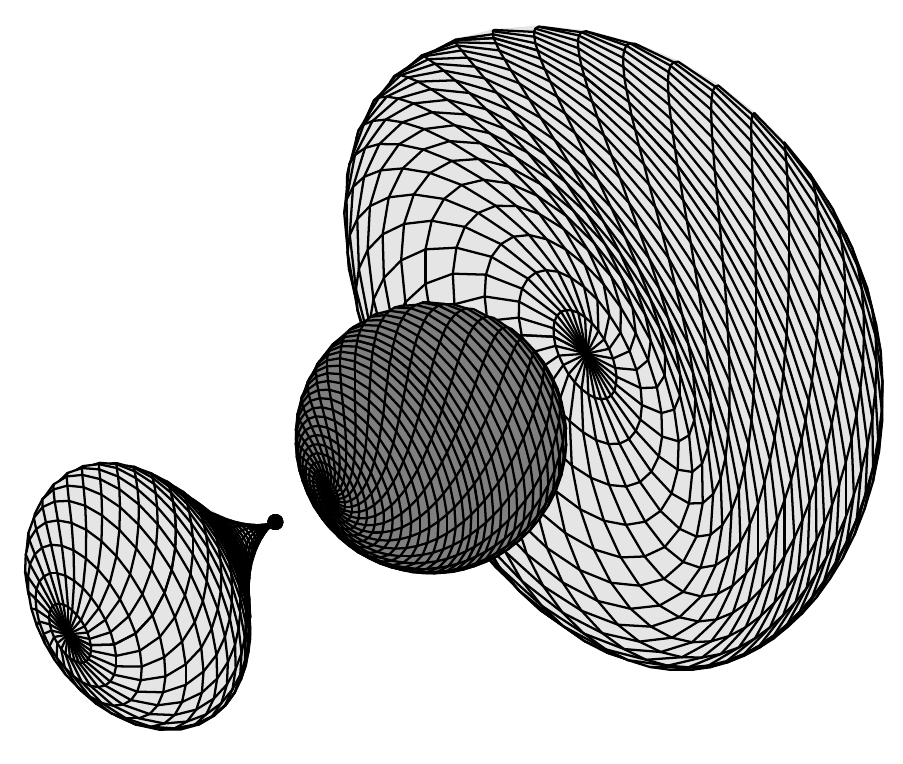}%
{\small \put(28,30){$O$} \put(42,17){$F$} \put(87,10){$G_1$} \put(28,8){$G_2$}}
\end{overpic}
}
\caption{Rational polar representation of a sphere and its conchoid surfaces}
\label{fig2}
\end{figure}

\paragraph{Rationality and Uni-Rationality}

The construction performed in Section~\ref{ratquart:sec} yields a rational
parameterization $\fv(u,v)$ of the sphere $F$ with rational radius function
$\rho(u,v)$, given by~\eqref{ratdist1}, such that $\fv(u,v)=\rho(u,v)\kv(u,v)$,
where $\kv(u,v)$ is an improper parameterization of the unit sphere $S^2$.
This means that typically a point $X\in F$ corresponds to two points $(u,v_1)$
and $(u,v_2)$ in the parameter domain. Rotating the curve $C$ around the $x$-axis,
the sphere $F$ is double covered.

The conchoid surface $G$ of $F$ at distance $d$ typically consists of two surfaces
$G_1$ and $G_2$, which admit the rational parameterizations
\begin{equation}\label{conchoid:parameterization}
\gv_1 = (\rho(u,v)+d)\kv(u,v), \mbox{ and } \gv_2 = (\rho(u,v)-d)\kv(u,v),
\end{equation}
for positive and negative distance. The conchoid $G=G_1\cup G_2$ is an
irreducible algebraic surface of order six.
It is {\it not bi-rational} equivalent to the
projective plane but each component $G_1$ as well as $G_2$ admits improper
{\it rational parameterizations}. These components $G_1$ and $G_2$ are called
{\it uni-rational}. This is not a contradiction to Castelnuovo's theorem since
we are not working over an algebraically closed field but over the field of real
numbers $\R$.

Let us consider an example to illustrate these properties. We consider the sphere
$F$ with center $\mv=(3/2,0,0)$ and radius $r=1$, and compute its conchoid $G$
for variable distance $d$. We obtain parameterizations $\gv_1(u,v)$ and $\gv_2(u,v)$ from
equation~\eqref{conchoid:parameterization} for the real uni-rational varieties
$G_1$ and $G_2$. The algebraic variety $G=G_1\cup G_2$ is given by the equation
%\begin{equation} % Equation for distance d = 5/2.
%\begin{array}{ll}
%G: & 4(x^2+y^2+z^2)^3-24x(x^2+y^2+z^2)^2-4(x^2+y^2+z^2)(x^2+10y^2+10z^2)\\
% & +120x(x^2+y^2+z^2)+225(y^2+z^2)=0.
%\end{array}
%\end{equation}
\begin{equation}
\begin{array}{ll}
G: & (x^2+y^2+z^2)(4(x^2+y^2+z^2)-12x+5)^2 \\
&  + d^2(40(x^2+y^2+z^2)-144x^2+96x(x^2+y^2+z^2)-32(x^2+y^2+z^2)^2)\\
&  + 16d^4(x^2+y^2+z^2) =0.\\
\end{array}
\end{equation}
%Since $G$ has two real components and no holes, its genus equals $1$, the number
%of components increased by the number of holes minus $1$. It is still not clear to me %what is the pluri-genus of this variety.
%\begin{verbatim}
%Please consider this paragraph.
%\end{verbatim}

%\begin{figure}
%\subfigure[Sphere $F$ and conchoid surfaces $G_1$ and $G_2$]{\label{conch_comp_fig}
%\begin{overpic}[width=.5\textwidth]{fig/conch_sphere_2components.pdf}%
%{\small \put(1,5){$O$} \put(35,5){$F$} \put(90,5){$F_d$}}
%\end{overpic}
%}
%\caption{Uni-rational components of the conchoid of a sphere}
%\label{fig2x}
%\end{figure}

\paragraph{Remarks on the parameterization}
The rational quartic $C$ on $F$ is of course not unique but depends on the re-parameterization~\eqref{repara:eq}. An admissible rational re-parameterization
of a real interval is of even degree. Let us consider a quadratic re-parameterization.
Since $\alpha$ is of degree four in $w$, the re-parameterized family is typically
of degree $\leq 8$ in $u$.
This implies that the solutions $y(u)$ and $z(u)$ are of degree $\leq 4$, which holds
also for $x(u)$ because of \eqref{x-coord_c:eq}.
The coefficient functions $\cv(u) = (x,y,z)(u)$ determine a rational quartic
$C$ on $F$, with rational norm $\|\cv\|=w(u)$.

Different choices of the interval and a quadratic re-parameterization will typically
result in different quartic curves on $F$.
In \eqref{repara:eq} we have chosen the largest possible interval and a rational
function satisfying $w(-u)=w(u)$ and obtained
the curve $C$ through antipodal points of $F$.
By rotating we obtain the full sphere, doubly covered.

For any quadratic re-parameterization, the quartic $C$ is the base locus of
a pencil of quadrics $Q(t) = F+tK$, spanned by the sphere $F$ and, for instance,
the quadratic projection cone $K$ with vertex at $C$'s double point.

The particular choice~\eqref{repara:eq} implies that the quartic $C$ is symmetric
with respect to the $xz$-plane. This holds since $u$ appears only with even powers
in $x$ and $z$, thus we have $x(-u)=x(u)$ and $z(-u)=z(u)$.
The orthogonal projection of $C$ to the $xz$-plane is doubly covered, thus a conic.
In this case $(x,z)(u)$ parameterizes a parabola, because of the factor $(1+u^2)^2$ in
$\cv(u)$'s denominator. This implies that the pencil $Q(t)$ can also be spanned by
the sphere $F$ and the parabolic cylinder $P$ passing through $C$, whose generating
lines are parallel to $y$. It can be proved that all quadrics in $Q(t)$ except $P$
are rotational quadrics with parallel axes. This implies that $K$ is a rotational cone,
and the remaining singular quadric $L$ is a rotational cone, too.
For the particular choice~\eqref{repara:eq} and for the generalized construction performed in Section~\ref{revcone:sec}, the rotational cone $L$ has the vertex $O$.
We note that for any admissible re-parameterization $L$'s vertex is typically
different from $O$.

%----------------------------------------------------------------------
\subsection{Pencil of quadrics in $\R^3$}
\label{quadric_pencil_r3}

The quartic curve $C$ from \eqref{quartic_on_s2:eq} on the sphere $F$
is the base locus of a pencil of quadrics $F+\l K$ in $\R^3$,
spanned by $F$ and the projection cone $K$ of $C$ from
its double point $\sv$, see Fig.~\ref{geom_prop:fig}.
The double point $\sv$ is located in the symmetry plane of $C$
and in the polar plane of the origin $O$ with respect to $F$. Its
coordinates are
\begin{equation}\label{doublepoint_quartic}
\sv=\frac{1}{m}(\g^2, 0, r\g) \mbox{ with } \g^2=m^2-r^2.
\end{equation}
The pencil $F+\l K$ contains two further singular quadrics which are obtained for the
zeros $\l_1=1/m$ and $\l_2=-1/\g$ of the characteristic polynomial
$$
\det(F+\l K) = r^2(m\l - 1)(\g \lambda + 1).
$$
Corresponding to $\l_1$ there is a parabolic cylinder $P$ with $y$-parallel generating lines passing through $C$. Corresponding to $\l_2$ we find the
rotational cone $L$ through $C$ with vertex $O$.

To give explicit representations for the quadrics we use homogeneous coordinates
$\yv=(1,x,y,z)^T$. Since there should not be any confusion, we use same notations
for the quadric $F$ and its coordinate
matrix appearing in the homogeneous quadratic equation $\yv^T\cdot F \cdot \yv=0$.
The coefficient matrices $F$ and $K$ read
\begin{equation}\label{sphere_cone:eq}
F=\left(
\begin{array}{cccc}
m^2-r^2 & -m & 0 & 0 \\
-m      & 1  & 0 & 0 \\
0       & 0  & 1 & 0 \\
0       & 0  & 0 & 1 \\
\end{array}
\right), \;
K=\left(
\begin{array}{cccc}
\gamma^{3} & -\gamma m & 0 & 0 \\
-\gamma m      & \gamma  & 0 & r \\
0       & 0  & -m & 0 \\
0       & r  & 0 & -\gamma \\
\end{array}
\right).
\end{equation}
An elementary computation shows that $K$ is a cone of revolution
with opening angle $\pi/2$ and $\av=(m+\g,0,r)$ denotes a direction
vector of its axis.

The cone $L$ through $C$ with vertex at $O$ is again a cone of revolution,
whose axis is parallel to $\av$. The parabolic cylinder $P$ through the
quartic $C$ has $y$-parallel generating lines. The axis of the cross section
parabola in the $xz$-plane is orthogonal
to $\av$, see Fig.~\ref{symcase_fig}.
The coefficient matrices $L$ and $P$ are
\begin{equation}\label{projection_cone:eq}
L = \left(
\begin{array}{cccc}
0 & 0 & 0 & 0 \\
0 & 0 & 0 & -r \\
0 & 0 & m+\g & 0 \\
0 & -r & 0 & 2\g\\
\end{array}
\right), \;
P =
\left(
\begin{array}{cccc}
\g^2(m+\g) & -m(m+\g) & 0 & 0 \\
-m(m+\g) & m+\g & 0 & r\\
0 & 0 & 0 & 0 \\
0 & r & 0 & m-\g \\
\end{array}
\right).
\end{equation}

A trigonometric parameterization of the quartic $C$ is obtained
by intersecting the cone $K$ with one quadric of the
pencil $F+\l K$, for instance $F$. Let $\av$ be a unit vector
in direction of $K$'s axis, and $\bv$ and $\cv$ complete it
to an orthonormal basis in $\R^3$.
A trigonometric parameterization of $K$ is given by
$$
\begin{array}{l}
\kv(t,v):=\sv + v(\av + (\bv \cos t + \cv\sin t )), \mbox{ with }\\
\av=\frac{1}{\sqrt{2m(m+\g)}}(m+\g,0,r), \;
\bv = (0,-1,0), \mbox{ and }
\cv = \frac{1}{\sqrt{2m(m+\g)}}(r,0,-(m+\g)).\\
\end{array}
$$
Thus $K$ admits the explicit parameterization
%$$
%\kv(t,v) =\left(
%\begin{array}{c}
%\frac{1}{2}
%\frac{2\g^2\sqrt{m(m+\g)} + v\sqrt{2}(m^2 + m\g + mr\sin t)}{m\sqrt{m(m+\g)}}\\
%-v\cos t\\
%\frac{1}{2}
%\frac{2r\g\sqrt{m(m+\g)} + v\sqrt{2}m(r-m\sin t-\g\sin t)}{m\sqrt{m(m+\g)}}\\
%\end{array}
%\right),
%$$
%or
$$
\kv(t,v) =\frac{1}{2m\sqrt{m(m+\g)}}\left(
\begin{array}{c}
2\g^2\sqrt{m(m+\g)} + v\sqrt{2}m(m + \g + r\sin t)\\
-2 v m \sqrt{m(m+\g)} \cos t\\
2r\g\sqrt{m(m+\g)} + v\sqrt{2}m(r-(m+\g)\sin t)\\
\end{array}
\right).
$$
Finally, a trigonometric parameterization of the quartic $C$ follows by
\begin{equation}\label{trig_rep_quartic}
\cv(t) =\frac{1}{2m}\left(
\begin{array}{c}
(m+r\sin t)^2 + \g^2 \\
\sqrt{2}\sqrt{m(m+\g)}\cos t(\g - m  -r\sin t) \\
r(m+\g)\cos^2 t
\end{array}
\right), \mbox{ with }  \|\cv(t)\| = m+r\sin t.
\end{equation}
%The norm of $C$ simply becomes
%\begin{equation}\label{norm_trig_rep_quartic}
%\|\cv(t)\| = m+r\sin t.
%\end{equation}
The correspondence of the trigonometric parameterization and its norm with
the expressions \eqref{quartic_on_s2:eq} and \eqref{ratdist1} in terms of rational
functions is realized by the substitutions $\sin t =(u^2-1)/(u^2+1)$ and
$\cos t = 2u/(u^2+1)$
and some rearrangement of the equations. Section~\ref{vivcrv:sec} discusses
relations to Viviani's curve (or Viviani's window).
This particular quartic has a similar shape and its pencil of quadrics
has similar properties. Viviani's curve has an additional symmetry.

\begin{figure}[ht]
%\begin{overpic}[height=.5\textheight]{fig/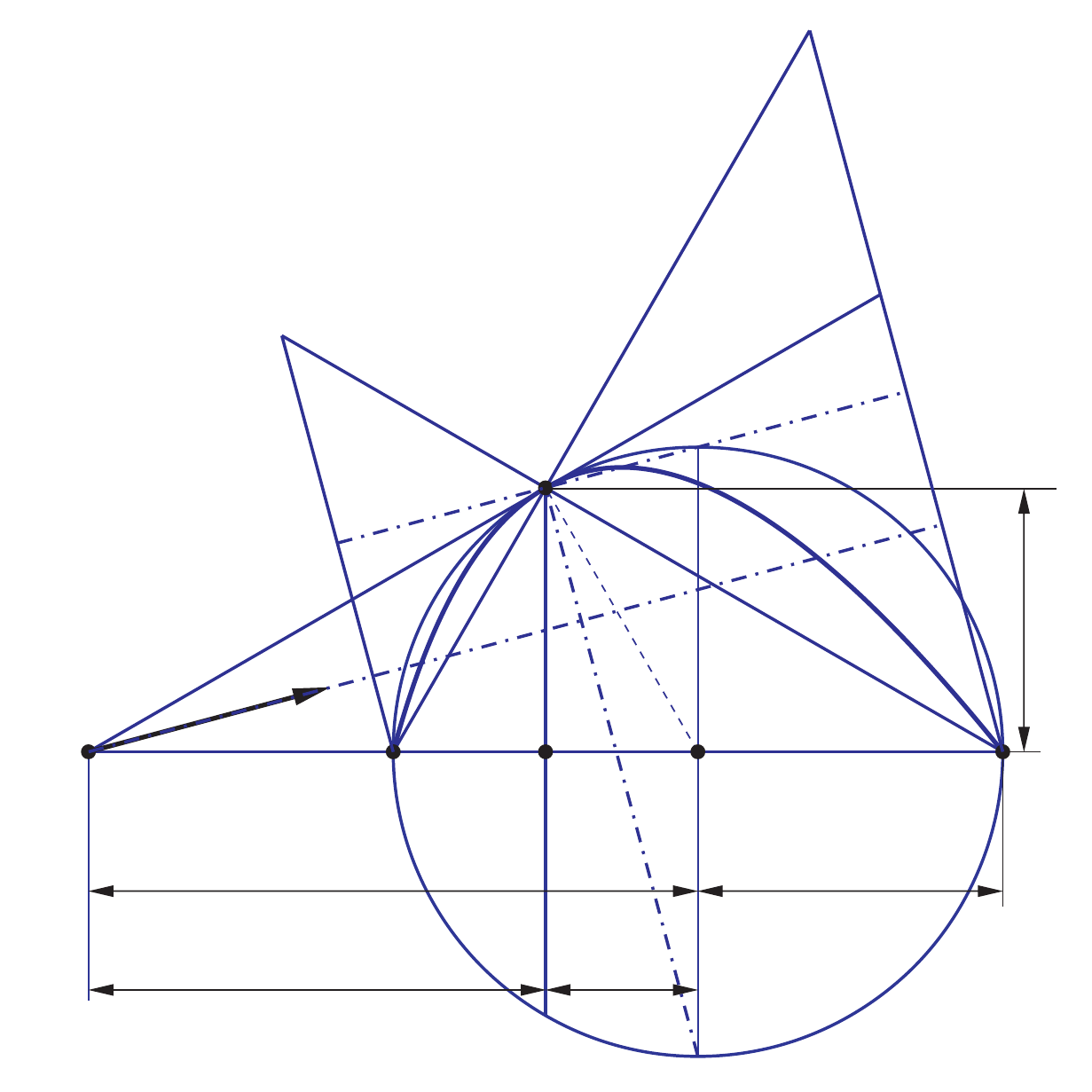}%
\subfigure[Symmetric Case]{\label{symcase_fig}
\begin{overpic}[width=.49\textwidth]{conch_sphere_basicfig_new.pdf}%
{\small
\put(78,10){$F$}
\put(80,45){$C$} \put(48,58){$\sv$} \put(28,32){$\av$}
\put(5,32){$O$} \put(45,32){$O'$} \put(65,32){$M$}
%\put(22,46.5){$P$} \put(35,46.5){$P'$}
\put(18,40){$L$} \put(30,60){$K$}
\put(24,20){$m$} \put(24,12){$\frac{m^2-r^2}{m}$} \put(75,20){$r$}
\put(55,12){$\frac{r^2}{m}$}
\put(95,42){$\frac{r\g}{m}$}
}
\end{overpic}
}
\subfigure[General Case]{\label{gencase_fig}
\begin{overpic}[width=0.49\textwidth, angle=0]{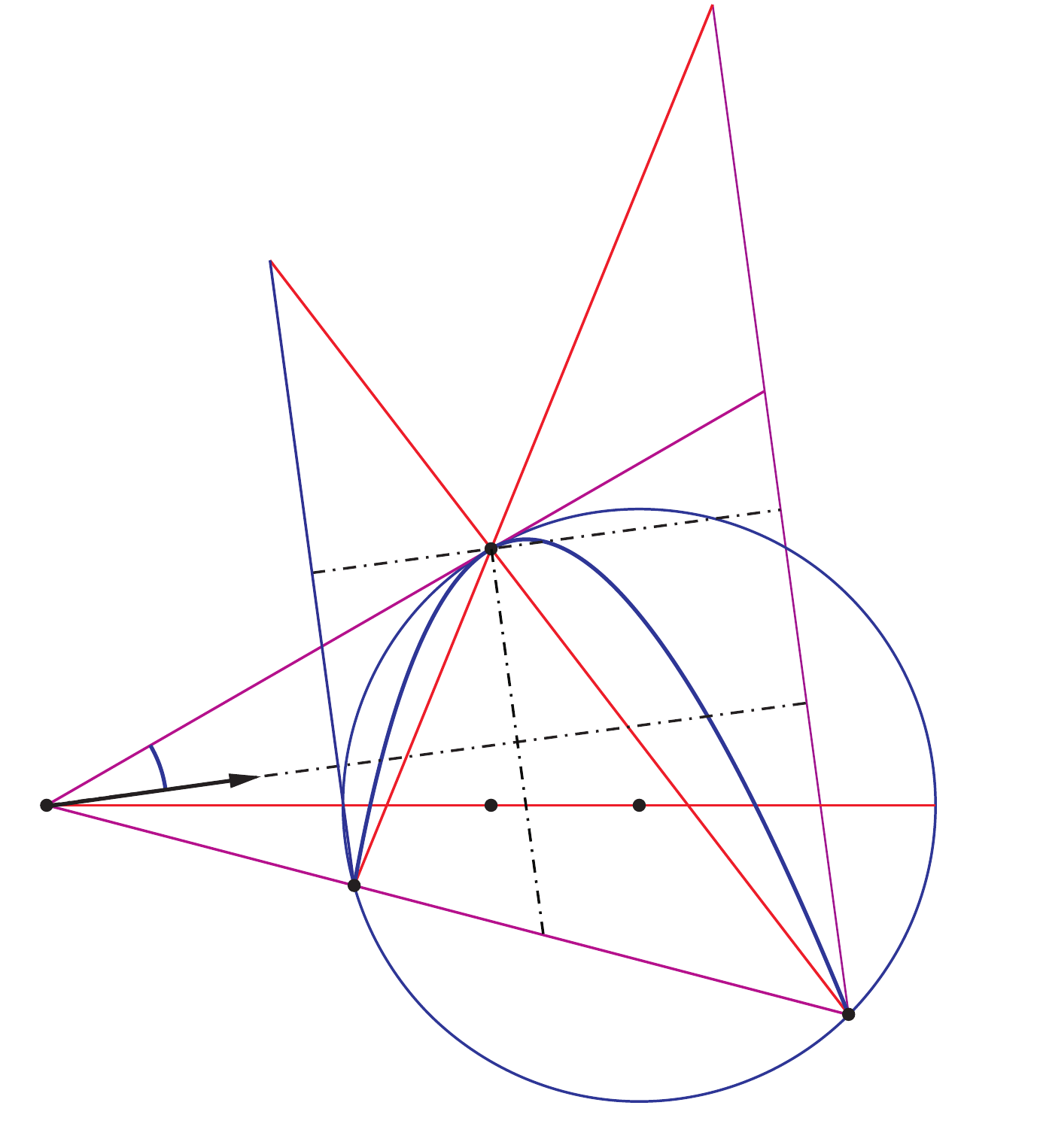}%
{\small
\put(42,10){$F$}
\put(56,48){$C$} \put(41,55){$\sv$} \put(20,33){$\av$}
\put(2,31){$O$} \put(40,31){$O'$} \put(51,31){$M$}
%\put(22,46.5){$P$} \put(35,46.5){$P'$}
\put(62,60){$L$} \put(28,60){$K$}
\put(15,33){$\tau$}
}
\end{overpic}
}
\caption{Geometric properties of the conchoid construction}
\label{geom_prop:fig}
\end{figure}

%\begin{remark}
%It can be proved that a pencil of quadrics $Q(t)=F+tL$ in $\R^3$
%containing a sphere $F$ and a rotational cone $L$ with axis parallel to $\av$
%consists of {\it rotational quadrics} with axes paralle to $\av$.
%This typically holds even for the singular quadrics in the pencil, except
%for parabolic cylinders of doubly counted planes.
%\end{remark}

\paragraph{Remark}

The inversion with center $O$ at the sphere which intersects the given sphere $F$
perpendicularly, maps the sphere $F$ onto itself. Analogously this inversion
fixes the rotational cone $L$.
Thus the quartic intersection curve $C=F\cap L$ remains fixed as a whole,
but of course not point-wise. The product of the distances
$\dist(O,P)$ and $\dist(O,P')$ of two
inverse points $P\in F$ and $P'\in F$ equals $\sqrt{m^2-r^2}$. This property
follows from the elementary tangent-secant-theorem of a circle.

%----------------------------------------------------------------------
\subsection{Relations to Viviani's curve}
\label{vivcrv:sec}

The quartic curve $C$, the base locus of the pencil of quadrics $F+tK$,
can be considered as generalization of Viviani's curve $V$.
This particularly well known curve $V$ is the base locus of a pencil of quadrics,
spanned by a sphere $F$ and a cylinder of revolution $L$ touching $F$
and passing through the center of $F$.
The pencil of quadrics of Viviani's curve also contains a right circular cone $K$ with
vertex in $V$'s double point and opening angle $\pi/2$, and further a parabolic
cylinder $P$. Viviani's curve $V$ is obtained from $C$ by letting $O\to\infty$.
Consequently, the inverse point $O'$ becomes the center of the sphere $F$.

Choosing the inverse point $O'=(\frac{m^2-r^2}{m},0,0)$ as origin, the
parameterization~\eqref{trig_rep_quartic} of $C$ becomes
\begin{equation}\label{quartic2:eq}
\cv(t) =\frac{1}{2m}\left(
\begin{array}{c}
r^2(1+\sin^2 t) + 2mr\sin t \\
\sqrt{2}\sqrt{m(m+\g)}\cos t(\g - m  -r\sin t) \\
r(m+\g)\cos^2 t
\end{array}
\right).
\end{equation}
By letting $m\to \infty$ one obtains $V$ as limit curve
\begin{equation}\label{quartic_viv:eq}
\vv(t) = \left(r\sin t, -r\sin t \cos t, r\cos^2 t \right).
\end{equation}

\begin{figure}
\subfigure[Quadric pencil of Viviani's curve]{\label{viv_pencil:fig}
\begin{overpic}[width=.45\textwidth]{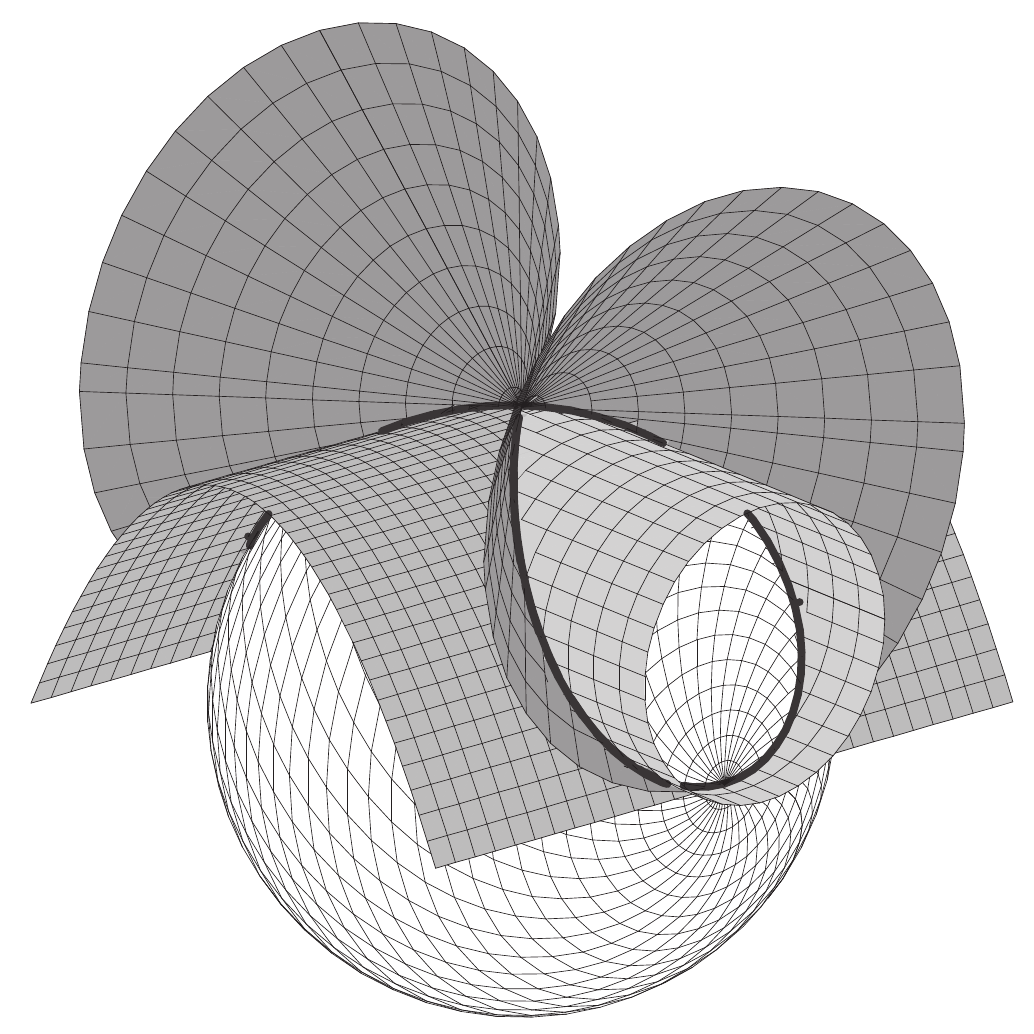}%
{\small
\put(55,80){$K$} \put(5,28){$P$} \put(21,10){$F$} \put(60,40){$L$}
\put(72,30){$V$}
}
\end{overpic}
}
\hfil
\subfigure[Quadric pencil of the quartic $C$]{\label{quadric_pencil:fig}
\begin{overpic}[width=.45\textwidth]{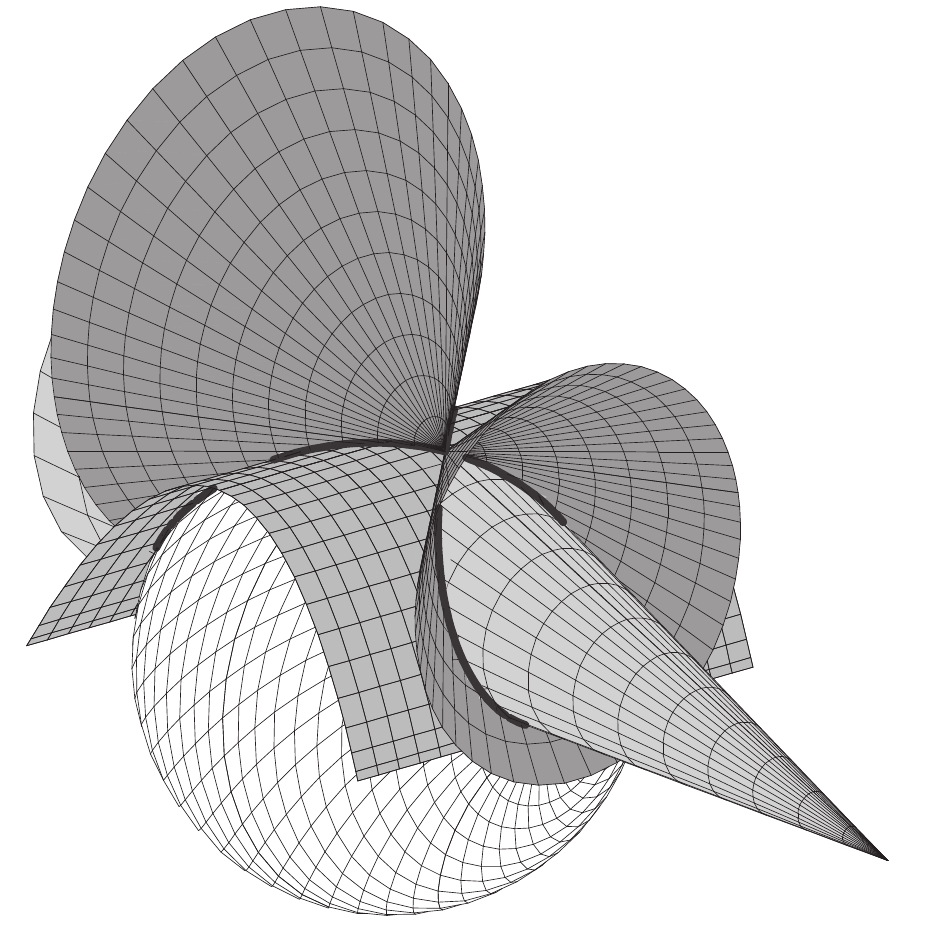}%
{\small
\put(53,80){$K$} \put(5,26){$P$} \put(20,5){$F$} \put(85,20){$L$}
}
\end{overpic}
}
\caption{Quadric pencils of Viviani's curve and its generalization}
\label{quad_pencil:fig}
\end{figure}

Fig.~\ref{viv_pencil:fig} illustrates Viviani's curve $V$,
together with the sphere and the singular quadrics belonging to the
pencil. The generalized Viviani curve $C$ being the base locus
of the pencil appearing in the conchoid construction of the sphere is illustrated
in Fig.~\ref{quadric_pencil:fig}.
In contrast to the classical Viviani curve $V$ whose single parameter $r$
is the radius of the sphere $F$, the quartic curve $C$ has two
parameters $r$ and $m$.

% BIS HIER

%----------------------------------------------------------------------
\section{Rotational quadrics with parallel axes}
\label{revcone:sec}

We consider the mentioned pencil of quadrics $Q(t)=A+tD$ from
Section~\ref{quad_pencil2:sec}, and a hyperplane
$E:ax+by+cz-dw=0$ passing through $O=(0,0,0,0)$.
The intersection $D\cap E$ is a quadratic cone whose projection onto $\R^3$
is a cone of revolution $L$ with axis in direction of $\av=(a,b,c)$. Assuming
$\|\av\|=1$, the opening angle $2\tau$ of $L$ is determined by $d=\cos\tau$.

Consider the quartic intersection curve $C=F\cap L$ of a sphere $F$ and the
cone of revolution $L$.
It is rational exactly if the cone $L$ is touching $F$ at a single point.
Since this touching point has to be contained in the polar plane of $O=(0,0,0)$
with respect to $F$, we choose $\sv=(\g^2/m, 0, r\g/m)$
(compare~\ref{doublepoint_quartic}) and prescribe
an arbitrary opening angle $2\tau$ for $L$. Thus the unit direction vector
of $L$'s axis is
$$
\av = \frac{1}{m}(\gamma\cos\tau - r\sin\tau, 0, \g\sin\tau + r\cos\tau)=(a,b,c).
$$
The quartic  $C$ is real if the axis is contained in
the wedge formed by $\sv$ and the $x$-axis, see Figure~\ref{gencase_fig}.
Thus $-r/\g\leq\tan\tau\leq0$,
because the rotation from $\sv$ to $\av$ by $\tau\leq 0$ is counterclockwise.
In the following we use the abbreviations $ct:=\cos\tau$ and $st:=\sin\tau$.
The quadrics of the pencil with base locus $C$ are denoted similarly to
Section~\ref{quadric_pencil_r3}. The coefficient matrix of the projection
cone $L$ reads
$$
L(\t) =
\left(
\begin{array}{cccc}
0 & 0 & 0 & 0 \\
0 & r^2(ct^2-st^2)+2\g r st ct  & 0 & -\g r (ct^2-st^2) + (r^2 - \g^2)st ct \\
0 & 0  & m^2ct^2 & 0 \\
0 & -\g r (ct^2-st^2) + (r^2 - \g^2)st ct  & 0 &
\g^2(ct^2 - st^2) -2\g r st ct  \\
\end{array}
\right).
$$
Rewriting $L(\t)$ in terms of the double angle $2\t$ and
substituting
\begin{equation}\label{double_angle_subs}
\cos2\tau=\g/m, \mbox{ and }  \sin2\tau=-r/m
\end{equation}
we obtain $L$ from equation~\eqref{projection_cone:eq}.
This holds for all equations and parameterizations in this section
in an analogous way.

The pencil of quadrics $F+tL(\tau)$ contains two further singular quadrics. The first is a parabolic
cylinder $P(\tau)$ passing through $C$.
It corresponds to the eigenvalue $\frac{-1}{m^2 ct^2}$
and its generating lines are parallel to the $y$-axis.
Its coefficient matrix of cylinder reads
$$
P(\t) =
\left(
\begin{array}{cccc}
\g^2m^2 ct^2 & -m^3 ct^2 & 0 & 0 \\
-m^3 ct^2 & \g^2(ct^2-st^2) + m^2 st^2 - 2 r\g st ct  & 0 &
(\g^2 - r^2) st ct + r\g (ct^2-st^2) \\
0 & 0  & 0 & 0 \\
0 & (\g^2 - r^2) st ct + r\g (ct^2-st^2)  & 0 &
m^2 ct^2 -\g^2(ct^2 - st^2) + 2r\g st ct  \\
\end{array}
\right).
$$
%If we represent the matrix $P(\t)$ with respect to the double angle $2\tau$ and
%perform the substitution~\eqref{double_angle_subs},
%we get $P$ from equation~\eqref{projection_cone:eq}.

Our goal is not only to characterize the pencil of quadrics but to provide
an explicit parameterization of the quartic curve $C$ on $F$ whose distance
from $O$ is rational. This is performed by using a parameterization of the second
singular quadric $K$ which corresponds to the zero
$\frac{r}{\g m^2 ct st}$ of the characteristic polynomial $\det(F+tL(\tau))$.
$K$ is a cone of revolution with axis parallel to $\av$, and its coefficient matrix
reads
$$
K(\t) =
\left(
\begin{array}{cccc}
\g^2 & -m & 0 & 0 \\
-m & \frac{\g (m^2+2r^2) st ct + r^3 (ct^2 - st^2)}{\g m^2 st ct}  & 0 &
\frac{-r((\g^2 - r^2) st ct + \g r (ct^2 - st^2))}{\g m^2 st ct} \\
0 & 0  & \frac{\g st+r ct}{\g st} & 0 \\
0 & \frac{-r((\g^2 - r^2) st ct + \g r (ct^2 - st^2))}{\g m^2 st ct}  & 0 &
\frac{\g (\g^2 - r^2) st ct + r \g^2 (ct^2 - st^2)}{\g m^2 st ct}  \\
\end{array}
\right).
$$
%Representing $K(\t)$ with respect to $2\tau$ and
%substituting according to~\eqref{double_angle_subs}
%gives $K$ from equation~\eqref{sphere_cone:eq}.

A parameterization of the cone of revolution $K$ with respect to
its vertex $\sv$ is
$$
\kv(u,v) = \sv + v(\av + R(\bv\cos u + \cv\sin u)),
$$
where $\av$ is a unit vector in direction of its axis, and $\bv$ and $\cv$
complete $\av$ to an orthonormal basis in $\R^3$, and $R$ denotes the radius
of the cross section circle at distance 1 from $\sv$ which has still to be determined.
In detail this reads
$$
\kv(u,v) = \left(
\begin{array}{ccc}
\frac{\g^2}{m} + v(\frac{\g ct - r st}{m} + R\frac{\sin u(\g st + r ct)}{m}) \\
-vR\cos u\\
\frac{\g r}{m} + v(\frac{\g st + r ct}{m} + R\frac{\sin u(-\g ct + r st)}{m}) \\
\end{array}
\right).
$$
Inserting $\kv(u,v)$ into the equation $\yv^T\cdot K(\t) \cdot\yv=0$ defines the radius
$$
R = \frac{\sqrt{-ct st(\g st + r ct)(\g ct - r st)}}{ct(\g st + r ct)} =
\sqrt{\frac{- st(\g ct - r st)}{ct(\g st + r ct)}}.
$$
The final parameterization of the quartic curve $C$ is obtained for
$v=\frac{2r(R\sin u ct - st)}{1+R^2}$ and is a bit lengthy.
It reads
\begin{equation}\label{ratdistquartic}
\cv(u) = \left(
\begin{array}{ccc}
\frac{(4 R r\sin u ct(\g ct - r st) + 2r ct (\g st + r ct)(R^2\sin^2u-1)
+ m^2 + r^2 + R^2(m^2 - r^2) - 2 Rr\g\sin u )}{m(1+R^2)}\\
\frac{-2Rr\cos u(R ct \sin u - st)}{1+R^2}\\
\frac{r( 2R^2 ct \sin^2 u ( r st - \g ct) +  4 R\g\sin u  ct st  - \g(1 - R^2)
 - 2r ct st + 2\g ct^2 + 2Rr\sin u (ct^2 - st^2))}{m(1+R^2)}
\end{array}
\right),
\end{equation}
and its norm is
$$
\|\cv(u)\| = \frac{\g ct(1+R^2) - 2r st + 2r R ct \sin(u)}{ct (1+R^2)}.
$$
This is proved by using the incidence $\cv\subset E$, thus
$a\cv_1+b\cv_2+c\cv_3=ct w$, with $w=\|\cv\|$.
Note that $R$ is {\it not rational} in any rational substitution for the
trigonometric functions $\cos\tau$ and $\sin\tau$.
Rotating $C$ around the $x$-axis gives a rational polar representation
$\fv(u,v)$ of the sphere $F$.
The resulting parameterization $\fv$ of $F$ is not proper, but
almost all points of $F$ are traced twice, therefore belonging to two parameter
values $(u_1,v)$ and $(u_2, v)$. We summarize the construction.

\begin{corollary}
There exists a one-parameter family of quartic curves $C(\tau)\subset F$
with double point at $\sv$ and symmetry plane $y=0$. The corresponding
pencils of quadrics $Q(t)=F+\lambda L(\tau)$ contain rotational cones
$K(\tau)$ and $L(\tau)$, where the vertex of the latter is at $O$, and
a parabolic cylinder $P(\tau)$. Besides $P(\tau)$ all quadrics have rotational
symmetry with parallel axes $\av(\tau)$. The distance function $\dist(OC)=\|\cv(u)\|$
is rational in the curve parameter, but not rational in the angle-parameter
$\tau$.
\end{corollary}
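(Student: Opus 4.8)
The plan is to assemble the Corollary from the explicit construction of this section, in which each cone of revolution $L(\tau)$ is produced by cutting the $\R^4$ cone $D:x^2+y^2+z^2=w^2$ with a hyperplane $E:ax+by+cz-dw=0$ through $O$ and projecting $D\cap E$ to $\R^3$, while $C(\tau)$ is realized as the base locus of the $\R^3$ pencil $F+\lambda L(\tau)$. First I would pin down the double point. As already noted in this section, $C=F\cap L$ is a rational (nodal) quartic precisely when $L$ touches $F$ at a single point $\sv$; since $L$ has its vertex at $O$ (because $E$ passes through $O$ and $D$ has vertex $O$), the generator $O\sv$ lies in the tangent plane of $L$ at $\sv$, so tangency with $F$ forces $O\sv$ to be tangent to $F$ there, i.e. $\sv$ lies in the polar plane $x=\g^2/m$ of $O$ with respect to $F$. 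Taking $\sv=\frac1m(\g^2,0,r\g)$ in the symmetry plane $y=0$ and prescribing the opening angle $2\tau$ then fixes the unit axis direction $\av(\tau)=\frac1m(\g\cos\tau-r\sin\tau,\,0,\,\g\sin\tau+r\cos\tau)$, yielding a one-parameter family of cones $L(\tau)$ and curves $C(\tau)$; reality restricts $\tau$ to the wedge $-r/\g\le\tan\tau\le0$. Because $\sv$, $O$ and $\av(\tau)$ all lie in $y=0$, both $F$ and $L(\tau)$ are symmetric about that plane, hence so is $C(\tau)$.

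The structural heart of the statement is that, apart from one exceptional member, every quadric of the pencil $F+\lambda L(\tau)$ is a surface of revolution with axis parallel to $\av(\tau)$. I would argue this conceptually rather than by inspecting coefficient matrices: the sphere $F$ is rotationally symmetric about every line through its centre, in particular about the line through $\mv$ in direction $\av(\tau)$, while $L(\tau)$ is rotationally symmetric about the parallel line through $O$. In a frame whose third axis is $\av(\tau)$, each of $F$ and $L(\tau)$ has an equation of the shape $\alpha(\bar x^2+\bar y^2)+(\text{terms in }\bar z)$ with no mixed $\bar x\bar y,\bar x\bar z,\bar y\bar z$ terms, so $F+\lambda L(\tau)$ inherits equal $\bar x^2,\bar y^2$ coefficients and no cross terms and is therefore again rotationally symmetric about a line parallel to $\av(\tau)$—for every $\lambda$ except the single value at which the $\bar x^2+\bar y^2$ coefficient vanishes. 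That exceptional value produces the parabolic cylinder $P(\tau)$, whose surviving linear terms lie in the $xz$-plane, so its generators run parallel to the $y$-axis. To locate the remaining degenerate members I would compute $\det(F+tL(\tau))$: its zero $-1/(m^2\cos^2\tau)$ recovers $P(\tau)$, and its zero $r/(\g m^2\cos\tau\sin\tau)$ gives the cone of revolution $K(\tau)$ with vertex $\sv$. The displayed matrices for $L(\tau),P(\tau),K(\tau)$ then follow by direct substitution and specialize to those of Section~\ref{quadric_pencil_r3} under $\cos2\tau=\g/m,\ \sin2\tau=-r/m$.

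It remains to parameterize $C(\tau)$ and verify the two claims about its distance from $O$. Parameterizing $K(\tau)$ from its vertex as $\kv(u,v)=\sv+v(\av+R(\bv\cos u+\cv\sin u))$ with $\{\av,\bv,\cv\}$ orthonormal and substituting into $\yv^TK(\tau)\yv=0$ fixes the cross-section radius $R$; intersecting this ruled surface with $F$ selects the value $v=2r(R\sin u\cos\tau-\sin\tau)/(1+R^2)$ and hence the rational quartic $\cv(u)$ of~\eqref{ratdistquartic}. For the distance I would invoke the cone model: the lift $\overline X=(x,y,z,w)$ of a point $X\in C$ satisfies $w=\dist(X,O)$ and lies in $E$, so $\cos\tau\cdot w=a\cv_1+b\cv_2+c\cv_3$, whence $\|\cv(u)\|=(\av\cdot\cv(u))/\cos\tau$ is a rational function of $u$. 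Finally, the radius $R=\sqrt{-\sin\tau(\g\cos\tau-r\sin\tau)/(\cos\tau(\g\sin\tau+r\cos\tau))}$ is a genuine square-root expression in $\cos\tau,\sin\tau$ and cannot be rationalized under any rational substitution for the trigonometric functions; since $R$ enters both $\cv(u)$ and its norm, the dependence on the angle parameter $\tau$ is not rational, which gives the last assertion.

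I expect the main obstacle to be the global rotational-symmetry claim—establishing it for the whole pencil rather than only for the three named quadrics. The \textquotedblleft sum of two surfaces of revolution about parallel axes\textquotedblright\ argument above is the clean route and avoids the otherwise heavy coefficient bookkeeping; once it is in place, the explicit matrices, the parameterization, and the two distance statements reduce to routine verification.
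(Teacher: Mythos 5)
Your proposal is correct and follows essentially the same route as the paper's Section~\ref{revcone:sec}: cut $D$ by the hyperplane $E$ through $O$ to get $L(\tau)$, place the tangency point $\sv$ in the polar plane of $O$, read off the singular members $P(\tau)$ and $K(\tau)$ from the characteristic polynomial, parameterize $K(\tau)$ from its vertex to obtain $\cv(u)$, derive rationality of $\|\cv(u)\|$ from the incidence $\cv\subset E$ (i.e.\ $a\cv_1+b\cv_2+c\cv_3=\cos\tau\cdot w$), and attribute the non-rationality in $\tau$ to the square root $R$. The one place where you genuinely depart from the paper is the claim that every non-exceptional member of the pencil is a quadric of revolution with axis parallel to $\av(\tau)$: the paper never proves this (in Section~\ref{quadric_pencil_r3} it is stated as ``it can be proved that\dots'', and in Section~\ref{revcone:sec} only the explicit matrices of $L,P,K$ are exhibited), whereas you give a short frame argument --- in coordinates adapted to $\av(\tau)$ both $F$ and $L(\tau)$ have equal $\bar x^2,\bar y^2$ coefficients and no cross terms, a property inherited by every linear combination, with the parabolic cylinder arising exactly when the common coefficient vanishes. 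That argument is sound and actually fills a gap the paper leaves open; the paper's matrix-based presentation buys explicit formulas needed for \eqref{ratdistquartic}, while your symmetry argument buys the global structural claim with no coefficient bookkeeping. Note only that, like the paper, you assert rather than prove that $R$ cannot be rationalized by any rational substitution for $(\cos\tau,\sin\tau)$, so on that point you are no weaker and no stronger than the original.
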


%----------------------------------------------------------------------
\section{Conclusion}

We have discussed the conchoid construction for spheres and have shown
that a sphere in $\R^3$ admits a rational polar representation with respect
to an arbitrary chosen focus point, which implies that the conchoid
surfaces of spheres possess rational parameterizations.
Additionally we have given a geometric
construction for these parameterizations which are based on a rational curve
of degree four being the base locus of a pencil of quadrics in $\R^3$.
Relations to the classical Viviani curve have been addressed.
The construction of the rational parameterization of the conchoids
is also based on a pencil of quadrics in $\R^4$.

%%----------------------------------------------------------------------
\section*{Acknowledgments}
This work was developed, and partially supported, under the research
project MTM2008-04699-C03-01 "Variedades paramétricas: algoritmos y
aplicaciones", Ministerio de Ciencia e Innovación, Spain and by "
Fondos Europeos de Desarrollo Regional" of the European Union.

%----------------------------------------------------------------------

\end{document}